\newtheorem{theorem}{Theorem}
\newtheorem{lemma}[theorem]{Lemma}
\newtheorem{corollary}[theorem]{Corollary}
\newtheorem{remark}[theorem]{Remark}
\theoremstyle{definition}
\theoremstyle{remark}
\numberwithin{equation}{section}
\def\[{{\langle}}
\def\]{{\rangle}}
\def\R{{\mathbb  R}}
\def\N{{\mathbb N}}
\def\d{{\mbox{div}}}
\def\A{{\alpha}}
\begin{document}

\title{Closure properties of solutions to heat inequalities}

\author{Jonathan Bennett}
\author{Neal Bez}
\thanks{Both authors were supported by EPSRC
grant EP/E022340/1. \\ \emph{Acknowledgement}. The first author
would like to thank Tony Carbery and Terence Tao for many
interesting and useful conversations on the context
and perspective of this work. We would also like to thank Dirk Hundertmark for his
contribution to the early stages of this work.}

\address{Jonathan Bennett and Neal Bez\\ School of Mathematics \\
The University of Birmingham \\
The Watson Building \\
Edgbaston \\
Birmingham \\
B15 2TT \\
United Kingdom}\email{J.Bennett@bham.ac.uk\\ N.Bez@bham.ac.uk}

\subjclass[2000]{44A35, 35K99, 52A40}

\date{28th of May 2008}



\begin{abstract} We prove that if $u_1,u_2 : (0,\infty) \times \R^d
\rightarrow (0,\infty)$ are sufficiently well-behaved solutions to
certain heat inequalities on $\R^d$ then the function $u: (0,\infty) \times \R^d
\rightarrow (0,\infty)$ given by $u^{1/p}=u_1^{1/p_1}
* u_2^{1/p_2}$ also satisfies a heat inequality of a similar type
provided $\tfrac{1}{p_1} + \tfrac{1}{p_2} = 1 + \tfrac{1}{p}$. On
iterating, this result leads to an analogous statement concerning
$n$-fold convolutions. As a corollary, we give a direct heat-flow
proof of the sharp $n$-fold Young convolution inequality and its
reverse form.
\end{abstract}

\maketitle

\section{Introduction}
It is known that if $d\in\mathbb{N}$ and
$u_1,u_2:(0,\infty)\times\mathbb{R}^d\rightarrow (0,\infty)$ satisfy
the heat inequality
\begin{equation}\label{first}
\partial_t u\geq\frac{1}{4\pi}\Delta
u,\end{equation} then any geometric mean of $u_1$ and $u_2$
satisfies the same heat inequality; i.e. for $1\leq
p_1,p_2\leq\infty$ satisfying $\tfrac{1}{p_1}+\tfrac{1}{p_2}=1$, the
function $$u:=u_1^{1/p_1}u_2^{1/p_2}$$ also satisfies the heat inequality
\eqref{first}. As a corollary to this, provided that $u_1$ and $u_2$
are sufficiently well-behaved, by the divergence theorem it follows
that the quantity
$$ Q(t):=\int_{\mathbb{R}^d}u_1(t,x)^{1/p_1}
u_2(t,x)^{1/p_2}dx$$ is nondecreasing for all $t>0$. Furthermore, on
insisting that, for $j=1,2$, $u_j$ satisfies \eqref{first} with
equality and sufficiently well-behaved initial data $f_j^{p_j}$, it
follows from this monotonicity that
$$
\int_{\mathbb{R}^d}f_1(x)f_2(x)dx=\lim_{t\rightarrow 0}Q(t)\leq
\lim_{t\rightarrow\infty}Q(t)=\|f_1\|_{p_1}\|f_2\|_{p_2};
$$
that is, we recover the classical H\"older inequality.

This closure property of solutions to heat inequalities may be
generalised considerably. Let $m,d\in\mathbb{N}$,
$d_1,\hdots,d_m\in\mathbb{N}$, $1\leq p_1,\hdots,p_m\leq\infty$ and
for each $1\leq j\leq m$ let
$B_j:\mathbb{R}^d\rightarrow\mathbb{R}^{d_j}$ be such that
$B_j^*B_j$ is a projection and
$$
\sum_{j=1}^m\tfrac{1}{p_j}B_j^*B_j=I_d,$$ where $I_d$ denotes the
identity on $\mathbb{R}^d$. If $u_j:(0,\infty)\times
\mathbb{R}^{d_j}\rightarrow (0,\infty)$ satisfies the heat
inequality \eqref{first} for each $j$, then one may show that the
same is true of the ``geometric mean"
$$u(t,x):=\prod_{j=1}^mu_j(t,B_jx)^{1/p_j}.$$ (Here $\Delta$
acts in the number of variables dictated by context.) Very much as
before, an immediate consequence of this is that $\int u(t,\cdot)$
is nondecreasing for all $t>0$, and that from this monotonicity one
may deduce the inequality
$$
\int_{\mathbb{R}^d}\prod_{j=1}^mf_j(B_jx)dx\leq
\prod_{j=1}^m\|f_j\|_{L^{p_j}(\mathbb{R}^{d_j})}.$$ This is the
celebrated geometric Brascamp--Lieb inequality due to Ball
\cite{Ball} for rank-one projections and Barthe \cite{Barthetrans2}
in the general rank case. Such a heat-flow approach to proving
inequalities, by its nature, generates sharp constants and
guarantees the existence of centred gaussian extremisers. All of
these observations were first made by Carlen, Lieb and Loss
\cite{CLL} for rank-one projections and by Bennett, Carbery, Christ
and Tao \cite{BCCT} in the general rank case. Recently, Barthe and
Huet \cite{BartheHuet} have given a different heat-flow proof of the
geometric Brascamp--Lieb inequality and, moreover, the same line of
argument also led them to a heat-flow proof of Barthe's reverse form
of the geometric Brascamp--Lieb inequality. See \cite{Barthetrans1},
\cite{Barthetrans2} for a statement of this reverse inequality. The
reader is referred to \cite{Barthe2} and the references therein for
further discussion of heat-flow methods in the context of such geometric
inequalities.

Aside from the geometric means above, and the trivial operation of ordinary addition, it is not difficult to verify that harmonic addition also preserves the set of solutions of \eqref{first}; i.e. if $u_1,u_2:(0,\infty)\times\mathbb{R}^d\rightarrow (0,\infty)$ satisfy \eqref{first}, then the function
$u$ given by
$$
\frac{1}{u}=\frac{1}{u_1}+\frac{1}{u_2}$$
also satisfies \eqref{first}. By the divergence theorem, this closure property is easily seen to imply a variant of the triangle inequality for harmonic addition.

All of the above closure properties involve pointwise operations.
The main purpose of this article is to establish closure properties
under rather different operations involving convolution. As a
consequence we provide heat-flow proofs of sharp convolution
inequalities that do not proceed via duality.

\subsection{Main Results}

Let $d \in \N$ and suppose $0<p_1,p_2,p<\infty$ satisfy
\begin{equation} \label{e:scaling}
  \frac{1}{p_1} + \frac{1}{p_2} = 1 + \frac{1}{p}.
\end{equation}
Let $0\leq \sigma_1,\sigma_2<\infty$ satisfy
\begin{equation} \label{e:BL}
  \frac{1}{p_1}\left(1-\frac{1}{p_1}\right)\sigma_2 =
  \frac{1}{p_2}\left(1-\frac{1}{p_2}\right)\sigma_1.
\end{equation}

The main contribution in this paper is captured by the following. We
clarify that the operation $*$ will denote spatial convolution.

\begin{theorem} \label{t:closure} For $j=1,2$ suppose that $u_j:(0,\infty) \times
\mathbb{R}^d \rightarrow (0,\infty)$ is such that
$u_j(t,\cdot)^{1/p_j}$, $\partial_t(u_j(t,\cdot)^{1/p_j})$,
$\nabla(u_j(t,\cdot)^{1/p_j})$, $u_j(t,\cdot)^{1/p_j}|\nabla \log
u_j(t,\cdot)|^2$ and $\Delta(u_j(t,\cdot)^{1/p_j})$ are rapidly
decreasing in space locally uniformly in time $t>0$. Let $u :
(0,\infty) \times \R^d \rightarrow (0,\infty)$ be given by
$$ u^{1/p} := u_1^{1/p_1} * u_2^{1/p_2} $$
and let $\sigma := (\sigma_1p_1 + \sigma_2p_2)/r$. Then
$u(t,\cdot)^{1/p}$, $\partial_t(u(t,\cdot)^{1/p})$,
$\nabla(u(t,\cdot)^{1/p})$, $u(t,\cdot)^{1/p}|\nabla \log u(t,\cdot)
|^2$ and $\Delta(u(t,\cdot)^{1/p})$ are also rapidly decreasing in
space locally uniformly in time $t>0$. Furthermore,
\begin{enumerate}
  \item if $p_j \geq 1$ and
  \begin{equation*}
    \partial_t u_j \geq \frac{\sigma_j}{4\pi}
    \Delta u_j,
  \end{equation*}
  for $j=1,2$, then
  \begin{equation} \label{e:heatinuj}
    \partial_t u \geq
    \frac{\sigma}{4 \pi}\Delta u;
  \end{equation}
\item if $p_j\leq 1$ and
  \begin{equation*}
    \partial_t u_j \leq \frac{\sigma_j}{4\pi} \Delta
    u_j,
  \end{equation*}
  for $j=1,2$, then
\begin{equation} \label{e:heatinu}
    \partial_t u \leq
    \frac{\sigma}{4 \pi}\Delta u.
  \end{equation}
\end{enumerate}
\end{theorem}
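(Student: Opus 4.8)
The plan is to work throughout with the roots $v_j := u_j^{1/p_j}$ and $v := u^{1/p} = v_1 * v_2$, since the heat inequalities linearise most cleanly at this level. First I would record that, writing $u_j = v_j^{p_j}$ and dividing by the positive factor $p_j v_j^{p_j-1}$, the hypothesis $\partial_t u_j \gtrless \tfrac{\sigma_j}{4\pi}\Delta u_j$ is equivalent to
\begin{equation*}
  \partial_t v_j \gtrless \frac{\sigma_j}{4\pi}\Bigl(\Delta v_j + (p_j-1)\frac{|\nabla v_j|^2}{v_j}\Bigr),
\end{equation*}
where $\gtrless$ is $\geq$ in case (1) (so $p_j\geq 1$) and $\leq$ in case (2) (so $p_j\leq 1$). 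The stated decay hypotheses on $v_j$, $\partial_t v_j$, $\nabla v_j$, $v_j|\nabla\log v_j|^2$ and $\Delta v_j$ are exactly what is needed to differentiate $v = v_1 * v_2$ under the integral sign and to integrate by parts freely; the corresponding decay statements for $v$ then follow because a convolution of rapidly decreasing functions is rapidly decreasing, using $\partial_t v = (\partial_t v_1)*v_2 + v_1*(\partial_t v_2)$, $\nabla v = (\nabla v_1)*v_2$, $\Delta v = (\Delta v_1)*v_2$, and, for the Fisher-information term, the pointwise bound established below with $a=1$.

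Next I would differentiate $v$ in $t$ and insert the inequalities for $v_j$. Since the convolution kernels $v_1,v_2$ are positive, integrating the two inequalities against them yields
\begin{equation*}
  \partial_t v \gtrless \frac{\sigma_1+\sigma_2}{4\pi}\Delta v + \frac{1}{4\pi}\bigl(\sigma_1(p_1-1)I_1 + \sigma_2(p_2-1)I_2\bigr),
\end{equation*}
where $I_1 := \bigl(v_1|\nabla\log v_1|^2\bigr)*v_2$ and $I_2 := v_1*\bigl(v_2|\nabla\log v_2|^2\bigr)$, and where I have used $\Delta v = (\Delta v_1)*v_2 = v_1*(\Delta v_2)$. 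Comparing with the target inequality $\partial_t v \gtrless \tfrac{\sigma}{4\pi}\bigl(\Delta v + (p-1)\tfrac{|\nabla v|^2}{v}\bigr)$, everything reduces to comparing $\sigma(p-1)\tfrac{|\nabla v|^2}{v}$ with $\sigma_1(p_1-1)I_1+\sigma_2(p_2-1)I_2 + (\sigma_1+\sigma_2-\sigma)\Delta v$.

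The crux rests on two observations. The first is the exact identity $\Delta v = \sum_{i=1}^d (\partial_{x_i}v_1)*(\partial_{x_i}v_2)$, obtained by landing one derivative on each factor. Interpreting $d\mu_x(y) := v_1(y)v_2(x-y)/v(x)\,dy$ as a probability measure, this says that $\Delta v/v$ is the $\mu_x$-average of $\nabla\log v_1(y)\cdot\nabla\log v_2(x-y)$, while $I_j/v$ and $\nabla v/v$ are the $\mu_x$-averages of $|\nabla\log v_j|^2$ and of $\nabla\log v_j$. The second is that for any splitting $a+b=1$ one may write $\nabla v/v$ as the $\mu_x$-average of $a\nabla\log v_1 + b\nabla\log v_2$, so that Cauchy--Schwarz (Jensen's inequality for $|\cdot|^2$) gives the pointwise bound
\begin{equation*}
  \frac{|\nabla v|^2}{v} \leq a^2 I_1 + b^2 I_2 + 2ab\,\Delta v.
\end{equation*}
I would then choose $a,b$ with $a+b=1$ so that $\sigma(p-1)a^2 = \sigma_1(p_1-1)$, $\sigma(p-1)b^2 = \sigma_2(p_2-1)$ and $2\sigma(p-1)ab = \sigma_1+\sigma_2-\sigma$. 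With such a choice the difference between the two sides of the target inequality collapses to $\sigma(p-1)\bigl(a^2 I_1 + b^2 I_2 + 2ab\,\Delta v - \tfrac{|\nabla v|^2}{v}\bigr)$, which has the favourable sign because $\sigma(p-1)\geq 0$ in case (1) and $\sigma(p-1)\leq 0$ in case (2), while the bracket is nonnegative by the Cauchy--Schwarz bound; translating back through $u=v^p$ then gives \eqref{e:heatinuj}, respectively \eqref{e:heatinu}.

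The main obstacle is precisely the solvability of this over-determined system. Taking square roots in the first two equations forces $a = \sqrt{\sigma_1(p_1-1)/\sigma(p-1)}$ and $b=\sqrt{\sigma_2(p_2-1)/\sigma(p-1)}$ (both real and nonnegative in either regime, since numerator and denominator change sign together), and then $a+b=1$ and the third equation become the scalar identities $\sqrt{\sigma_1(p_1-1)}+\sqrt{\sigma_2(p_2-1)} = \sqrt{\sigma(p-1)}$ and $2\sqrt{\sigma_1(p_1-1)\,\sigma_2(p_2-1)} = \sigma_1+\sigma_2-\sigma$. I expect the bulk of the remaining work to be verifying these: the balance condition \eqref{e:BL} is exactly what yields $\sqrt{\sigma_1(p_1-1)}/\sqrt{\sigma_2(p_2-1)} = p_1/p_2$, and together with the scaling relation \eqref{e:scaling} and the normalization of $\sigma$ (which amounts to $\sigma p = \sigma_1 p_1 + \sigma_2 p_2$, i.e.\ $r=p$) the two displayed identities follow by a direct computation. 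The case $p=1$, where the gradient terms drop out, is handled separately and trivially.
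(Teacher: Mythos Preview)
Your approach is correct and is essentially the same as the paper's, repackaged in a slightly cleaner way: working at the level of $v=u^{1/p}$ instead of $u$, and phrasing the key inequality as Jensen/Cauchy--Schwarz for the probability measure $d\mu_x(y)=v_1(y)v_2(x-y)/v(x)\,dy$, is exactly the content of the paper's Lemma~\ref{l:main} (the variance identity $E|X|^2-|EX|^2=\tfrac12 E\otimes E|X-X'|^2$ applied to $X=\Lambda_1^{1/2}\nabla\log u_1+\Lambda_2^{1/2}\nabla\log u_2$), and your algebraic system for $a,b$ is equivalent to the paper's verification of \eqref{e:1} and \eqref{e:2}.

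One small slip: the ratio $\sqrt{\sigma_1(p_1-1)}/\sqrt{\sigma_2(p_2-1)}$ is not $p_1/p_2$ but $\dfrac{1-1/p_1}{1-1/p_2}$ (parametrise $\sigma_j=c\,(p_j-1)/p_j^2$ via \eqref{e:BL}); with this correction, $\sqrt{\sigma_1(p_1-1)}+\sqrt{\sigma_2(p_2-1)}=\sqrt{c}\,(2-\tfrac1{p_1}-\tfrac1{p_2})=\sqrt{c}\,(1-\tfrac1p)=\sqrt{\sigma(p-1)}$ follows from \eqref{e:scaling} and $\sigma p=\sigma_1p_1+\sigma_2p_2$, and the cross-term identity then drops out automatically since it is $(\sqrt{\sigma(p-1)})^2-\sigma_1(p_1-1)-\sigma_2(p_2-1)$.
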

An important feature of this closure property is that the
(technical) regularity ingredients are all satisfied when $u_1$ and
$u_2$ are solutions to heat \emph{equations} with sufficiently
well-behaved initial data. Indeed, we shall see that Theorem
\ref{t:closure} implies the following.
\begin{corollary} \label{c:Ymon}
For $j=1,2$ let $u_j$ satisfy the heat equation
\begin{equation*}
    \partial_t u_j = \frac{\sigma_j}{4\pi} \Delta u_j
\end{equation*}
with initial data a compactly supported positive finite Borel
measure. Let $Q : (0,\infty) \rightarrow (0,\infty)$ be given by
\begin{equation*}
  Q(t) := \|u_1(t,\cdot)^{1/p_1} * u_2(t,\cdot)^{1/p_2}\|_{L^p(\R^d)}.
\end{equation*}
If $p_1,p_2 \geq 1$ then $Q(t)$ is nondecreasing for each $t > 0$
and if $p_1,p_2 \leq 1$ then $Q(t)$ is nonincreasing for each $t >
0$.
\end{corollary}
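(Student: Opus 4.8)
The plan is to reduce the asserted monotonicity of $Q$ to the differential inequality supplied by Theorem \ref{t:closure}. For each fixed $t>0$, set $u^{1/p}:=u_1(t,\cdot)^{1/p_1}*u_2(t,\cdot)^{1/p_2}$, exactly as in that theorem, so that
$$ Q(t)^p=\int_{\R^d}\bigl(u^{1/p}\bigr)^p\,dx=\int_{\R^d}u(t,x)\,dx. $$
Since $s\mapsto s^{1/p}$ is increasing on $(0,\infty)$ for $p>0$, it suffices to show that $t\mapsto\int_{\R^d}u(t,x)\,dx$ is nondecreasing when $p_1,p_2\geq 1$ and nonincreasing when $p_1,p_2\leq 1$.

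First I would verify that the regularity hypotheses of Theorem \ref{t:closure} are met by the $u_j$. As each $u_j$ solves the heat equation with compactly supported finite Borel measure initial data, for every $t>0$ it is the convolution of that measure with a Gaussian whose width is proportional to $\sigma_j t$; consequently $u_j(t,\cdot)$ is smooth, strictly positive, and---together with all of its spatial and temporal derivatives---decays at a Gaussian rate in space, locally uniformly in $t$. The required rapid decay of $u_j(t,\cdot)^{1/p_j}$, $\partial_t(u_j(t,\cdot)^{1/p_j})$, $\nabla(u_j(t,\cdot)^{1/p_j})$, $u_j(t,\cdot)^{1/p_j}|\nabla\log u_j(t,\cdot)|^2$ and $\Delta(u_j(t,\cdot)^{1/p_j})$ then follows, since powers and logarithmic derivatives of such Gaussian-type functions remain rapidly decreasing. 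Moreover, because a solution of the heat equation satisfies $\partial_t u_j=\frac{\sigma_j}{4\pi}\Delta u_j$ with equality, both one-sided inequalities appearing in the hypotheses of parts (1) and (2) hold automatically.

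With this in hand Theorem \ref{t:closure} applies directly. When $p_1,p_2\geq 1$ it yields $\partial_t u\geq\frac{\sigma}{4\pi}\Delta u$ and simultaneously guarantees that $u^{1/p}$ and its derivatives are rapidly decreasing. Using $\partial_t u=p(u^{1/p})^{p-1}\partial_t(u^{1/p})$ and differentiating under the integral sign (justified by that decay), we obtain
$$ \frac{d}{dt}\int_{\R^d}u(t,x)\,dx=\int_{\R^d}\partial_t u(t,x)\,dx\geq\frac{\sigma}{4\pi}\int_{\R^d}\Delta u(t,x)\,dx. $$
Since $\nabla u=p(u^{1/p})^{p-1}\nabla(u^{1/p})$ inherits the rapid spatial decay, the divergence theorem forces $\int_{\R^d}\Delta u\,dx=0$; hence $\frac{d}{dt}\int_{\R^d}u\,dx\geq 0$ and $Q$ is nondecreasing. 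The case $p_1,p_2\leq 1$ is identical, the only change being the reversed inequality from part (2), which gives $\frac{d}{dt}\int_{\R^d}u\,dx\leq 0$ and a nonincreasing $Q$. Note that the precise value of $\sigma$ is immaterial here, as the term it multiplies vanishes.

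The principal obstacle is the first step: confirming that the rather long list of regularity conditions in Theorem \ref{t:closure} genuinely holds for heat solutions arising from mere measure initial data, and that it is preserved under the fractional powers and logarithmic gradients involved. Once this is established, the remaining steps---differentiation under the integral sign and the vanishing of the Laplacian integral via the divergence theorem---are routine, both being underwritten by the rapid decay that Theorem \ref{t:closure} propagates from $u_1,u_2$ to $u$.
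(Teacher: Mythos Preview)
Your strategy coincides with the paper's: check that heat solutions with compactly supported measure data satisfy the regularity hypotheses of Theorem~\ref{t:closure}, apply that theorem to obtain the heat inequality for $u$, then integrate in space and invoke the divergence theorem. The paper carries out the regularity verification explicitly via the representation $u_j(t,x)=t^{-d/2}\int e^{-\pi|x-y|^2/t}\,d\mu_j(y)$, extracting in particular the pointwise bound $|\nabla u_j(t,x)|\le 2\pi t^{-d/2}(|x|+M)\,u_j(t,x)$, so that $v_j:=\nabla u_j/u_j$ has at most linear growth.

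There is, however, one point you have glossed over, and it is precisely the step you describe as routine. In the case $p_1,p_2\le 1$ one has $p\le 1$ by \eqref{e:scaling}, so in the identity $\nabla u=p\,(u^{1/p})^{p-1}\nabla(u^{1/p})$ the factor $(u^{1/p})^{p-1}$ is a \emph{negative} power of a function tending to zero at infinity. The conclusions of Theorem~\ref{t:closure}---rapid decay of $u^{1/p}$ and of $\nabla(u^{1/p})$---are only upper bounds and give no control on $(u^{1/p})^{p-1}$; so the claim that $\nabla u$ ``inherits the rapid spatial decay'' from Theorem~\ref{t:closure} is unjustified in this regime, and with it your appeal to the divergence theorem. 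The paper closes this gap not via Theorem~\ref{t:closure} but by going back to the specific structure of the $u_j$: writing $\nabla u=p\,(u_1^{1/p_1}v_1*u_2^{1/p_2})\,u^{1-1/p}$ and applying H\"older's inequality inside the convolution with an exponent $r$ satisfying $1>1/r>1-p$, one absorbs enough of the integrand into a positive power $u^{1-1/p+1/(rp)}$ of $u$, while the remaining convolution involves only positive powers of $u_1,u_2$ and a power of $|v_1|$, all of which are rapidly decreasing by the linear growth of $v_1$. An alternative fix, closer in spirit to what you wrote, is to note the Gaussian \emph{lower} bound $u_j(t,x)\ge c_t\,e^{-C_t|x|^2}$ coming from the compact support of $d\mu_j$; you mention Gaussian upper bounds, but it is the lower bound that controls the negative power.
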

The proofs of Theorem \ref{t:closure} and Corollary \ref{c:Ymon}
appear in Section \ref{section:closure1}.

\begin{remark} \label{remarks:intro} \emph{Under the hypotheses of Corollary
\ref{c:Ymon}, it follows from our proof in Section
\ref{section:closure1} that
\begin{align*}
  Q'(t) = &\frac{\varepsilon}{8\pi Q(t)^{p-1}} \int_{\R^d} \int_{\R^d}
  \int_{\R^d}
    (u_1(t,\cdot)^{1/p_1} * u_2(t,\cdot)^{1/p_2})(x)^{p-2}
  u_1(t,x-y)^{1/p_1}u_2(t,y)^{1/p_2} \times \\ &
  u_1(t,x-z)^{1/p_1}u_2(t,z)^{1/p_2}
  \left|\left(\tfrac{\sigma_1}{p_1}|\tfrac{1}{p_1}-1|\right)^{1/2}\tfrac{\nabla u_1}{u_1}(t,x-y)
+
  \left(\tfrac{\sigma_2}{p_2}|\tfrac{1}{p_2}-1|\right)^{1/2}\tfrac{\nabla u_2}{u_2}(t,y) \right.\\ & \left. \quad   -
  \left(\tfrac{\sigma_1}{p_1}|\tfrac{1}{p_1}-1|\right)^{1/2}\tfrac{\nabla u_1}{u_1}(t,x-z) -
  \left(\tfrac{\sigma_2}{p_2}|\tfrac{1}{p_2}-1|\right)^{1/2}\tfrac{\nabla u_2}{u_2}(t,z)
  \right|^2 \,dxdydz
\end{align*}
for each $t > 0$. Here $\varepsilon$ is defined to be 1 if $p_1,p_2
\geq 1$ and $-1$ if $p_1,p_2 \leq 1$. Consequently, if exactly one
of $p_1$ and $p_2$ is equal to 1 then the corresponding monotonicity
in Corollary \ref{c:Ymon} is strict. In this case and if $p_j$ is
equal to 1, then it is amusing to note that $\sigma_j$ is zero by
\eqref{e:BL}; that is, the heat-flow $u_j$ is constant in time.}
\end{remark}

We now describe the sharp convolution inequalities that follow from
these results. Recall that the sharp form of the Young convolution
inequality on $\R^d$ states that if $p_1,p_2\geq 1$ and
$\tfrac{1}{p_1} + \tfrac{1}{p_2} = 1 + \tfrac{1}{p}$ then
\begin{align} \label{e:sharpYoung}
    \|f_1 * f_2\|_{L^p(\R^d)} \leq \left(\frac{C_{p_1}C_{p_2}}{C_{p}}\right)^d
    \|f_1\|_{L^{p_1}(\R^d)}\|f_2\|_{L^{p_2}(\R^d)}
\end{align}
for any nonnegative functions $f_j$ in $L^{p_j}(\R^d)$, where $C_r
:= \big(r^{1/r}/r'^{1/r'}\big)^{1/2}$. The sharp constant in
\eqref{e:sharpYoung} is due to Beckner \cite{Beckner2},
\cite{Beckner} and Brascamp and Lieb \cite{BL}. The sharp reverse
form of \eqref{e:sharpYoung} states that if $p_1,p_2 \leq 1$ and
$\tfrac{1}{p_1} + \tfrac{1}{p_2} = 1 + \tfrac{1}{p}$ then
 \begin{align} \label{e:reverseYoung}
    \|f_1 * f_2\|_{L^p(\R^d)} \geq \left(\frac{C_{p_1}C_{p_2}}{C_{p}}\right)^d
    \|f_1\|_{L^{p_1}(\R^d)}\|f_2\|_{L^{p_2}(\R^d)}
  \end{align}
for any nonnegative functions $f_j$ in $L^{p_j}(\R^d)$. Leindler
\cite{Leindler} proved \eqref{e:reverseYoung} with a nonsharp
constant and Brascamp and Lieb found the sharp constant in \cite{BL}
(see also Barthe's simpler argument in \cite{Barthe} which proves
both forms with sharp constants). It is easy to see that from
Corollary \ref{c:Ymon} one may recover both \eqref{e:sharpYoung} and
\eqref{e:reverseYoung}. To see this, let $0<p_1,p_2,p<\infty$
satisfy \eqref{e:scaling} and note that it suffices to verify both
inequalities when the $f_j^{p_j}$ are bounded, integrable and
compactly supported functions. For $j=1,2$ let $u_j$ satisfy the
heat equation
\begin{equation} \label{e:heateqn}
    \partial_t u_j = \frac{\sigma_j}{4\pi}\Delta u_j
\end{equation}
with initial data $u_j(0,x) := f_j(x)^{p_j}$. By the dominated
convergence theorem, one can show that,
  \begin{equation*}
    \lim_{t \rightarrow 0} Q(t) = \|f_1 \ast f_2 \|_{L^p(\R^d)}
  \end{equation*}
and, combined with a simple change of variables,
\begin{equation*}
    \lim_{t \rightarrow \infty} Q(t) = \big\|H_{\sigma_1}^{1/p_1} * H_{\sigma_2}^{1/p_2} \big\|_{L^p(\R^d)}
    \|f_1\|_{L^{p_1}(\R^d)}\|f_2\|_{L^{p_2}(\R^d)}.
\end{equation*}
  Here,
  \begin{equation*}
  H_t(x) := (1/t)^{d/2}e^{-\pi|x|^2/t}
  \end{equation*}
  is the appropriate heat kernel at time $t$ and $\sigma_1, \sigma_2$ satisfy \eqref{e:BL}. A direct computation shows that
  \begin{equation*}
   \big\|H_{\sigma_1}^{1/p_1} * H_{\sigma_2}^{1/p_2} \big\|_{L^p(\R^d)} =
   \left(\frac{C_{p_1}C_{p_2}}{C_{p}}\right)^d
  \end{equation*}
  and hence, Corollary \ref{c:Ymon} immediately implies both
  \eqref{e:sharpYoung} and \eqref{e:reverseYoung}. We also remark that if the initial data for $u_1$ and $u_2$
  are extremal then $Q(t)$ is constant in time. It is possible to
  recover the complete characterisation of the extremals
in the Young convolution inequality and its reverse form from the
expression for $Q'(t)$ in Remark \ref{remarks:intro}. We omit the
details of this.

  An alternative perspective on the sharp Young convolution inequality on $\R^d$ is
  to consider the following dual formulation. Suppose $1\leq p_1,p_2,p_3 <\infty$
  satisfy $\tfrac{1}{p_1} + \tfrac{1}{p_2} + \tfrac{1}{p_3} = 2$. The inequality
  \begin{equation} \label{e:dual}
    \int_{\R^d}\int_{\R^d}
    f_1(x)^{1/p_1}f_2(y)^{1/p_2}f_3(x-y)^{1/p_3}dxdy \leq
    \prod_{j'=1}^3 C_{p_{j'}}^d \prod_{j=1}^3
    \|f_{j}\|_{L^1(\R^d)}^{1/p_j}
  \end{equation}
  for all nonnegative integrable functions $f_j$ is equivalent to the Young convolution
  inequality in \eqref{e:sharpYoung}. It is known that if each $f_j$ evolves under an appropriate
  heat-flow $u_j$ then the quantity
  \begin{equation*}
    \int_{\R^d} \int_{\R^d} u_1(t,x)^{1/p_1}u_2(t,y)^{1/p_2}u_3(t,x-y)^{1/p_3} \,dxdy
  \end{equation*}
  is nondecreasing for each $t > 0$ from which the inequality in
  \eqref{e:dual} follows. This type of dualised heat-flow approach to the Young convolution
inequality on $\R^d$ can be found in \cite{CLL} and \cite{BCCT}.
Carlen, Lieb and Loss have also shown that this type of heat-flow
approach can be used to prove certain analogues of the Young
convolution inequality in other settings. See \cite{CLL} and
\cite{CLL2} for analogues on the euclidean sphere and the
permutation group, respectively (see also \cite{BCM}).

It is also worth noting that in our undualised setup when the
exponent $p$ is a natural number and $1\leq p_1, p_2 <\infty$, by
multiplying out the $p$th power of the integral one may deduce the
monotonicity of $Q$ directly from \cite{BCCT} (see also \cite{CLL}).

\subsection{Iterated convolutions}

Naturally Theorem \ref{t:closure} self-improves to a
result involving iterated convolutions, which we now state. Let
$0<p_1,\ldots,p_n,p<\infty$ satisfy
  \begin{equation} \label{e:scalinggen}
    \sum_{j=1}^n \frac{1}{p_j} = n-1+\frac{1}{p}
  \end{equation}
and let $0 \leq \sigma_1,\ldots,\sigma_n < \infty$ satisfy
\begin{equation*}
  \frac{1}{p_j}\left(1-\frac{1}{p_j}\right)\sigma_k =
  \frac{1}{p_k}\left(1-\frac{1}{p_k}\right)\sigma_j
\end{equation*} for each $j,k=1,\ldots,n$. As before these relations
uniquely define $\sigma_1,\hdots,\sigma_n$ up to a common scale factor.

\begin{corollary} \label{c:closure} For $j=1, \ldots, n$ suppose that $u_j:(0,\infty) \times
\mathbb{R}^d \rightarrow (0,\infty)$ is such that
$u_j(t,\cdot)^{1/p_j}$, $\partial_t(u_j(t,\cdot)^{1/p_j})$,
$\nabla(u_j(t,\cdot)^{1/p_j})$, $u_j(t,\cdot)^{1/p_j}|\nabla \log
u_j(t,\cdot)|^2$ and $\Delta(u_j(t,\cdot)^{1/p_j})$ are rapidly
decreasing in space locally uniformly in time for $t>0$. Let $u :
(0,\infty) \times \R^d \rightarrow (0,\infty)$ be given by
$$ u^{1/p} := u_1^{1/p_1} * \cdots * u_n^{1/p_n} $$
and let $\sigma := \tfrac{1}{r}\sum_{j=1}^n \sigma_jp_j$. Then
$u(t,\cdot)^{1/p}$, $\partial_t(u(t,\cdot)^{1/p})$,
$\nabla(u(t,\cdot)^{1/p})$, $u(t,\cdot)^{1/p}$ $|\nabla \log
u(t,\cdot)|^2$ and $\Delta(u(t,\cdot)^{1/p})$ are also rapidly
decreasing in space locally uniformly in time for $t>0$.
Furthermore,
\begin{enumerate}
  \item if $p_j \geq 1$ and
  \begin{equation*}
    \partial_t u_j \geq \frac{\sigma_j}{4\pi}
    \Delta u_j,
  \end{equation*}
  for $j=1,\ldots,n$, then
  \begin{equation*} \label{e:teapot}
    \partial_t u \geq
    \frac{\sigma}{4 \pi}\Delta u;
  \end{equation*}
\item if $p_j\leq 1$ and
  \begin{equation*}
    \partial_t u_j \leq \frac{\sigma_j}{4\pi} \Delta
    u_j,
  \end{equation*}
  for $j=1,\ldots,n$, then
\begin{equation*} \label{e:heatinequality2}
    \partial_t u \leq
    \frac{\sigma}{4 \pi}\Delta u.
  \end{equation*}
\end{enumerate}
\end{corollary}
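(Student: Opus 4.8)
The plan is to argue by induction on the number of factors $n$, with the base case $n=2$ being precisely Theorem \ref{t:closure}. For the inductive step I would split off the last factor and write
$$ u^{1/p} = \big(u_1^{1/p_1} * \cdots * u_{n-1}^{1/p_{n-1}}\big) * u_n^{1/p_n}, $$
introducing an intermediate exponent $q$ through $\tfrac1q := \sum_{j=1}^{n-1}\tfrac1{p_j} - (n-2)$. A glance at \eqref{e:scalinggen} shows both that $\tfrac1q + \tfrac1{p_n} = 1 + \tfrac1p$ and that $p_1,\ldots,p_{n-1},q$ satisfy the $(n-1)$-fold scaling relation; moreover $q\geq 1$ when every $p_j\geq 1$ and $q\leq 1$ when every $p_j\leq 1$, so the two cases of the corollary remain internally consistent under this regrouping. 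Setting $v^{1/q} := u_1^{1/p_1}*\cdots*u_{n-1}^{1/p_{n-1}}$, the inductive hypothesis applies to $u_1,\ldots,u_{n-1}$ (whose $\sigma$-relations form a sub-collection of the full set and so hold), yielding all the rapid-decrease properties for $v$ together with $\partial_t v \geq \tfrac{\tau}{4\pi}\Delta v$ (respectively $\leq$), where $\tau := \tfrac1q\sum_{j=1}^{n-1}\sigma_j p_j$ is the coefficient produced by the corollary at output exponent $q$ (the denominator $r$ in the coefficient being the output exponent itself).

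It then remains to apply Theorem \ref{t:closure} to the pair $v,u_n$. This requires checking that $(q,\tau)$ and $(p_n,\sigma_n)$ satisfy the compatibility relation \eqref{e:BL}, and that the coefficient $\tfrac{\tau q + \sigma_n p_n}{r}$ returned by the theorem agrees with the claimed $\sigma = \tfrac1r\sum_{j=1}^n\sigma_j p_j$. The coefficient identity is immediate, since $\tau q = \sum_{j=1}^{n-1}\sigma_j p_j$ and hence $\tau q + \sigma_n p_n = \sum_{j=1}^n\sigma_j p_j$. For \eqref{e:BL} the cleanest device is to record that the defining $\sigma$-relations are exactly the statement that $\sigma_j = \lambda\,\tfrac1{p_j}\big(1-\tfrac1{p_j}\big)$ for all $j$ and a single constant $\lambda$ whose sign is fixed by which case one is in. Substituting this into $\tau$ and invoking the intermediate scaling relation $\sum_{j=1}^{n-1}\tfrac1{p_j} = (n-2)+\tfrac1q$ telescopes the sum:
$$ \tau = \frac{\lambda}{q}\sum_{j=1}^{n-1}\Big(1-\frac1{p_j}\Big) = \frac{\lambda}{q}\Big(1-\frac1q\Big), $$
so $\tau$ obeys the very same $\lambda$-parametrisation as the $\sigma_j$. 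With $(q,\tau)$ and $(p_n,\sigma_n)$ both of the form $\lambda\,\tfrac1{(\cdot)}(1-\tfrac1{(\cdot)})$, the relation \eqref{e:BL} between them holds automatically, and Theorem \ref{t:closure} delivers the desired heat inequality for $u$ as well as its rapid-decrease properties, completing the induction.

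I expect the only genuine difficulty to be organisational rather than analytic: maintaining simultaneous consistency of the scaling relation, the compatibility relation \eqref{e:BL}, and the coefficient formula across the regrouping. The $\lambda$-parametrisation dissolves precisely this difficulty, reducing every identity to the one-line telescoping computation above, so that no new analysis beyond the two-factor Theorem \ref{t:closure} enters.
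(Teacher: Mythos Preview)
Your proposal is correct and is exactly the approach the paper has in mind: the paper gives no detailed argument for this corollary, merely remarking that ``it is a simple exercise to verify that Corollary~\ref{c:closure} follows from Theorem~\ref{t:closure}'', and your induction---splitting off the last factor and checking that the intermediate exponent and coefficient fit back into the two-factor hypotheses---is the natural way to carry out that exercise. The $\lambda$-parametrisation of the $\sigma_j$ is a clean device for handling the compatibility relation \eqref{e:BL}, and the telescoping identity $\sum_{j=1}^{n-1}(1-\tfrac{1}{p_j})=1-\tfrac{1}{q}$ is precisely what makes the induction close.
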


It is a simple exercise to verify that Corollary \ref{c:closure}
follows from Theorem \ref{t:closure}.

\begin{corollary} \label{cor:iterated}
For $j=1,\ldots,n$ let $u_j$ satisfy the heat equation
\begin{equation*}
    \partial_t u_j = \frac{\sigma_j}{4\pi} \Delta u_j
\end{equation*}
with initial data a compactly supported positive finite Borel
measure. Let $Q : (0,\infty) \rightarrow (0,\infty)$ be given by
\begin{equation*}
  Q(t) := \|u_1(t,\cdot)^{1/p_1} * \cdots * u_n(t,\cdot)^{1/p_n}\|_{L^p(\R^d)}.
\end{equation*}
If $p_1,\ldots,p_n \geq 1$ then $Q(t)$ is nondecreasing for each $t
> 0$ and if $p_1,\ldots,p_n \leq 1$ then $Q(t)$ is nonincreasing for each $t >
0$.
\end{corollary}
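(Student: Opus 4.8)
The plan is to deduce Corollary \ref{cor:iterated} from Corollary \ref{c:closure} by the very same divergence-theorem argument that yields Corollary \ref{c:Ymon}, now carrying $n$ convolution factors rather than two. The first task is to check that the particular functions $u_j$ appearing here genuinely satisfy the regularity hypotheses of Corollary \ref{c:closure}. Writing $\mu_j$ for the compactly supported positive finite Borel measure serving as the initial datum of $u_j$, and assuming first that $\sigma_j > 0$, we have $u_j(t,\cdot) = H_{\sigma_j t} * \mu_j$, which for each fixed $t>0$ is smooth, strictly positive on $\R^d$, and—since $\mu_j$ has compact support—decays like a Gaussian together with all of its spatial derivatives, with bounds that are locally uniform in $t$. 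Consequently $u_j(t,\cdot)^{1/p_j}$, $\partial_t(u_j(t,\cdot)^{1/p_j})$, $\nabla(u_j(t,\cdot)^{1/p_j})$, $u_j(t,\cdot)^{1/p_j}|\nabla \log u_j(t,\cdot)|^2$ and $\Delta(u_j(t,\cdot)^{1/p_j})$ are all rapidly decreasing in space locally uniformly in $t>0$, so the hypotheses of Corollary \ref{c:closure} are in force. If instead $p_j = 1$ then the balancing relation preceding Corollary \ref{c:closure} forces $\sigma_j = 0$, so that $u_j \equiv \mu_j$ is constant in time; this degenerate factor is harmless, and can be handled by approximating $\mu_j$ with the smooth data $H_\varepsilon * \mu_j$ and letting $\varepsilon \to 0$, so we may assume throughout that every $\sigma_j > 0$.

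With the hypotheses secured, set $v := u^{1/p} = u_1^{1/p_1} * \cdots * u_n^{1/p_n}$, so that Corollary \ref{c:closure} guarantees both that $v$, $\partial_t v$, $\nabla v$, $v|\nabla \log u|^2$ and $\Delta v$ are rapidly decreasing in space locally uniformly in $t$, and that $u = v^p$ obeys $\partial_t u \geq \tfrac{\sigma}{4\pi}\Delta u$ when all $p_j \geq 1$ and the reverse inequality when all $p_j \leq 1$. Since $u>0$ we have
\begin{equation*}
  Q(t)^p = \int_{\R^d} v(t,x)^p \, dx = \int_{\R^d} u(t,x) \, dx,
\end{equation*}
which is finite because $v$ is rapidly decreasing in space. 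The rapid-decay bounds, being locally uniform in $t$, justify differentiating under the integral sign, and I would record this as
\begin{equation*}
  p \, Q(t)^{p-1} Q'(t) = \int_{\R^d} \partial_t u(t,x) \, dx.
\end{equation*}

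Inserting the heat inequality from Corollary \ref{c:closure} and integrating the pointwise bound over $\R^d$ gives $p\,Q^{p-1}Q' \geq \tfrac{\sigma}{4\pi}\int_{\R^d}\Delta u\,dx$ in the case $p_j \geq 1$ (with the inequality reversed in the case $p_j \leq 1$). It then remains only to verify that $\int_{\R^d}\Delta u\,dx = 0$: by the divergence theorem this reduces to showing that the flux of $\nabla u$ across large spheres vanishes in the limit, for which it suffices that $\nabla u$ be rapidly decreasing. Here $\nabla u = u\nabla\log u = p\,v^{p-1}\nabla v$, equivalently $|\nabla u|^2 = v^p\,\bigl(v^p|\nabla\log u|^2\bigr)$, and this is precisely the point where the listed quantities are needed. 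Since $Q > 0$ and $p > 0$, we then conclude $Q' \geq 0$ when $p_1,\ldots,p_n \geq 1$ and $Q' \leq 0$ when $p_1,\ldots,p_n \leq 1$. The main obstacle is this last analytic step: extracting rapid decay of $\nabla u$ (and the dominated-convergence bound needed to differentiate under the integral) from the stated hypotheses. The delicate case is $p < 1$, where the factor $v^{p-1}$ grows where $v$ is small; the Gaussian structure of $v$ inherited from the $u_j$—through which $\nabla v$ decays fast enough to overwhelm $v^{p-1}$—is what makes the flux vanish, and it is this reconciliation that I expect to require the most care.
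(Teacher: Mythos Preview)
Your approach is correct and matches the paper's own: the paper simply remarks that Corollary~\ref{cor:iterated} follows from Corollary~\ref{c:closure} in exactly the same way that Corollary~\ref{c:Ymon} follows from Theorem~\ref{t:closure}, namely by verifying the regularity hypotheses via the explicit heat-kernel representation and then applying the divergence theorem. Your identification of the delicate point---controlling $\nabla u$ when $p<1$ so that $v^{p-1}$ is unbounded---is accurate, and the paper resolves it precisely as you anticipate, by exploiting the Gaussian structure of the $u_j$ (specifically, the at-most-linear growth of $\nabla u_j/u_j$ combined with a H\"older-inequality splitting of the convolution) rather than relying solely on the abstract rapid-decay output of the closure result.
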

We remark that Corollary \ref{cor:iterated} follows from Corollary
\ref{c:closure} in the same way that Corollary \ref{c:Ymon} follows
from Theorem \ref{t:closure}. As one may expect, from Corollary
\ref{cor:iterated} (and its proof) we recover the sharp $n$-fold
Young convolution inequality, its reverse form and a complete
characterisation of extremals. We omit the details of this.

When $p$ is an even integer, this $n$-fold Young convolution
inequality is of course related to the Hausdorff--Young inequality
via Plancherel's theorem. In particular,
\begin{equation} \label{e:FT}
  \big\| \widehat{u(t,\cdot)^{1/p'}}
    \big\|_{L^{p}(\R^d)} = \|u(t,\cdot)^{1/p'} * \cdots *
    u(t,\cdot)^{1/p'}\|_{L^2(\R^d)}^{2/p}
\end{equation}
where \hspace{0.01cm} $\widehat{\;}$ \hspace{0.01cm} denotes the
Fourier transform and the iterated convolution is $p/2$-fold. By
Corollary \ref{cor:iterated} it follows that the above quantity is
nondecreasing for $t > 0$ if $u$ satisfies the heat equation
$\partial_t u = \tfrac{1}{4\pi}\Delta u$ with initial data a
compactly supported finite positive Borel measure. We remark that
the nondecreasingness of the quantity in \eqref{e:FT} also follows
from \cite{BCCT}. We refer the interested reader to \cite{BBC} for
an explicit verification of how this fact follows from \cite{BCCT}
and for counterexamples to the monotonicity of $$\big\|
\widehat{u(t,\cdot)^{1/p'}} \big\|_{L^{p}(\R^d)}$$ whenever $p$ is
not an even integer.

\subsection{Further results}

We now describe some extensions of our results when the scaling
condition \eqref{e:scaling} (or more generally \eqref{e:scalinggen})
is relaxed. Let $0<\alpha_1, \alpha_2\leq 1$ and $1\leq p <\infty$
be such that
\begin{equation*}
  \alpha_1 + \alpha_2 \geq 1 + \frac{1}{p}.
\end{equation*}
Suppose $0\leq \rho_1, \rho_2 \leq 1$ satisfy
\begin{equation} \label{e:weight}
  \rho_1\alpha_1 + \rho_2\alpha_2 = 1+\frac{1}{p}
\end{equation}
and let $0\leq \sigma_1,\sigma_2<\infty$ satisfy
\begin{equation} \label{e:BL2}
  \alpha_1(1-\rho_1\alpha_1)\sigma_2 =
  \alpha_2(1-\rho_2\alpha_2)\sigma_1.
\end{equation}

\begin{theorem} \label{t:closure2}
  For $j=1,2$ suppose that $u_j:(0,\infty) \times
\mathbb{R}^d \rightarrow (0,\infty)$ is such that
$u_j(t,\cdot)^{\A_j}$, $\partial_t(u_j(t,\cdot)^{\A_j})$,
$\nabla(u_j(t,\cdot)^{\A_j})$, $u_j(t,\cdot)^{\A_j}|\nabla \log
u_j(t,\cdot)|^2$ and $\Delta(u_j(t,\cdot)^{\A_j})$ are rapidly
decreasing in space locally uniformly in time for $t>0$. Let $u :
(0,\infty) \times \R^d \rightarrow (0,\infty)$ be given by
$$ u(t,x)^{1/p} := t^{d(\alpha_1+\alpha_2-1-1/p)/2}(u_1(t,\cdot)^{\A_1} *
    u_2(t,\cdot)^{\A_2})(x) $$
and let $\sigma := (\tfrac{\sigma_1}{\A_1} +
  \tfrac{\sigma_2}{\A_2})/p$. Then
$u(t,\cdot)^{1/p}$, $\partial_t(u(t,\cdot)^{1/p})$,
$\nabla(u(t,\cdot)^{1/p})$, $u(t,\cdot)^{1/p}|\nabla \log
u(t,\cdot)|^2$ and $\Delta(u(t,\cdot)^{1/p})$ are also rapidly
decreasing in space locally uniformly in time for $t>0$.

Furthermore, if
\begin{equation} \label{e:he}
  \partial_t u_j \geq \frac{\sigma_j}{4\pi} \Delta u_j
\end{equation}
  and, for each $t>0$,
  \begin{equation} \label{e:lc}
    \sigma_j\emph{div}\left(\frac{\nabla u_j}{u_j}\right)(t,\cdot) \geq -\frac{2d\pi}{t}
  \end{equation}
for $j=1,2$ then
  \begin{equation} \label{e:heforu}
    \partial_t u \geq \frac{\sigma}{4\pi} \Delta u
  \end{equation}
  and, for each $t>0$,
  \begin{equation} \label{e:lcforu}
    \sigma\emph{div}\left(\frac{\nabla u}{u}\right)(t,\cdot) \geq -\frac{2d\pi}{t}.
  \end{equation}
\end{theorem}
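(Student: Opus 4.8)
The plan is to follow the scheme behind Theorem \ref{t:closure} (carried out in Section \ref{section:closure1} for the balanced case \eqref{e:scaling}, \eqref{e:BL}), isolating the three new features forced by the relaxed scaling: the exponents $\A_j$ in place of $1/p_j$, the time weight $t^{\beta}$ with $\beta:=\tfrac{d}{2}(\A_1+\A_2-1-\tfrac1p)$, and the auxiliary pointwise bound \eqref{e:lc}. Write $v_j:=u_j^{\A_j}$, $\ell_j:=\log v_j$ and $w:=v_1*v_2$, so that $u=v^p$ with $v=t^\beta w$. First I would record the two hypotheses as pointwise inequalities for $\ell_j$. Using $\ell_j=\A_j\log u_j$ and $\Delta u_j/u_j=\Delta\log u_j+|\nabla\log u_j|^2$, the heat inequality \eqref{e:he} becomes $\partial_t\ell_j\ge\tfrac{\sigma_j}{4\pi}\Delta\ell_j+\tfrac{\sigma_j}{4\pi\A_j}|\nabla\ell_j|^2$, while \eqref{e:lc} reads $\tfrac{\sigma_j}{\A_j}\Delta\ell_j\ge-\tfrac{2d\pi}{t}$. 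Note that $\A_j\le1$ together with $\A_1+\A_2\ge1+\tfrac1p$ forces $\A_j\ge\tfrac1p$ and $\beta\ge0$; both signs are used below.

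Next I would reduce \eqref{e:heforu} to a pointwise inequality in $w$ alone. The chain rule for $u=v^p$ turns $\partial_t u\ge\tfrac{\sigma}{4\pi}\Delta u$ into $v(\partial_t v-\tfrac{\sigma}{4\pi}\Delta v)\ge\tfrac{\sigma}{4\pi}(p-1)|\nabla v|^2$, and substituting $v=t^\beta w$ and cancelling $t^{2\beta}$ yields
\[ w\Big(\partial_t w-\tfrac{\sigma}{4\pi}\Delta w\Big)+\tfrac{\beta}{t}w^2\ge\tfrac{\sigma}{4\pi}(p-1)|\nabla w|^2. \]
I would then express each derivative of $w$ as a $y$-integral against the positive density $a(x,y):=v_1(x-y)v_2(y)$, exploiting the two representations $\int\nabla\ell_1(x-y)\,a\,dy=\int\nabla\ell_2(y)\,a\,dy=\nabla w$ and their $\Delta w$ analogues, and insert the inequality for $\partial_t\ell_j$ from the first step. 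The crucial manoeuvre is that the second-order integrals $\int\Delta\ell_j\,a\,dy$ can be integrated by parts in $y$ (boundary terms vanish by the assumed rapid decrease) and rewritten as the first-order quantities $\int(\nabla\ell_1\cdot\nabla\ell_2-|\nabla\ell_j|^2)\,a\,dy$. After this the whole estimate becomes a pointwise (in $x$) inequality between $y$-integrals of quadratic forms in $(g_1,g_2):=(\nabla\ell_1(x-y),\nabla\ell_2(y))$.

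Setting $s_j:=\sigma_j/\A_j$, $\kappa_j:=s_j(\A_j-\tfrac1p)\ge0$ and $c_j:=\sqrt{s_j(1-\A_j)}$, the target reduces to bounding $\tfrac{w}{4\pi}\int\mathcal Q\,a\,dy+\tfrac\beta t w^2$ below by $\tfrac{\sigma}{4\pi}(p-1)|\nabla w|^2$, where $\mathcal Q=c_1^2|g_1|^2+c_2^2|g_2|^2+(\kappa_1+\kappa_2)\,g_1\cdot g_2$. I would split $\mathcal Q=|c_1g_1+c_2g_2|^2+\delta\,g_1\cdot g_2$ with $\delta:=\kappa_1+\kappa_2-2c_1c_2$. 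Here \eqref{e:BL2} is the balance condition fixing the ratio $\sigma_1:\sigma_2$; in the balanced subcase $\rho_j=1$ it reduces to \eqref{e:BL} and forces $\delta=0$, so $\mathcal Q$ is a perfect square and the argument recovers Theorem \ref{t:closure}, the squared integrand being exactly the expression in Remark \ref{remarks:intro}. Applying Cauchy--Schwarz in the form $(c_1+c_2)^2|\nabla w|^2\le w\int|c_1g_1+c_2g_2|^2a\,dy$ (valid since $\int(c_1g_1+c_2g_2)a\,dy=(c_1+c_2)\nabla w$) together with the elementary identity $(c_1+c_2)^2=\sigma(p-1)-\delta$, the whole estimate collapses to the single residual inequality $\delta\big(w\!\int g_1\cdot g_2\,a\,dy-|\nabla w|^2\big)+\tfrac{4\pi\beta}{t}w^2\ge0$.

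This residual inequality is where I expect the real difficulty, and the reason \eqref{e:lc} is imposed. The bracket $w\int g_1\cdot g_2\,a\,dy-|\nabla w|^2$ is a covariance and has no definite sign, but integrating by parts once more expresses $\int g_1\cdot g_2\,a\,dy$ through $\int\Delta\ell_j\,a\,dy$, to which \eqref{e:lc} applies and yields a lower bound of order $w^2/t$; the favourable sign of $\beta$ then absorbs this, and a careful tally of the constants—the delicate point—shows that the two contributions cancel exactly. The second conclusion \eqref{e:lcforu} is proved in parallel: writing $\mathrm{div}(\nabla u/u)=\Delta\log u=p\big(\tfrac{\Delta w}{w}-\tfrac{|\nabla w|^2}{w^2}\big)$, expanding through the same $a$-integral representation, integrating by parts and invoking \eqref{e:lc} on each factor produces the sharp bound $\sigma\,\mathrm{div}(\nabla u/u)\ge-\tfrac{2d\pi}{t}$; this closure is precisely what allows the two hypotheses to propagate under iteration. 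Finally, the regularity and rapid-decrease assertions follow as in Theorem \ref{t:closure}, since convolution preserves rapid decrease and the extra factor $t^\beta$ is smooth in $t$ and spatially constant, hence does not affect the decay in $x$.
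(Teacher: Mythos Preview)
Your overall scheme---reduce to a pointwise quadratic inequality in $w=v_1*v_2$ and then invoke the log-convexity bound \eqref{e:lc}---matches the paper's, and your computations up through the identification of $\mathcal Q=c_1^2|g_1|^2+c_2^2|g_2|^2+(\kappa_1+\kappa_2)\,g_1\!\cdot\!g_2$ are correct. The gap is in the completion of the square and the order in which \eqref{e:lc} is applied.

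With your weights $c_j=\sqrt{s_j(1-\A_j)}$ the residual inequality reads $\delta\,\Delta\log w+\tfrac{4\pi\beta}{t}\ge0$ (since $w\!\int g_1\!\cdot\!g_2\,a\,dy-|\nabla w|^2=w^2\Delta\log w$). The sharpest bound on $\Delta\log w$ obtainable from \eqref{e:lc} is precisely \eqref{e:lcforu}, namely $\Delta\log w\ge-\tfrac{2d\pi}{(s_1+s_2)t}$; so your residual would require $\delta\le(s_1+s_2)(\A_1+\A_2-1-\tfrac1p)$. This fails in general. Take $d=1$, $p=2$, $\A_1=\tfrac45$, $\A_2=\tfrac9{10}$, $\rho_1=\tfrac34$, $\rho_2=1$ (so \eqref{e:weight} holds) and $s_1=4$, $s_2=1$ (so \eqref{e:BL2} holds); then $\delta=1.6-2\sqrt{0.08}\approx1.034$ while $(s_1+s_2)(\A_1+\A_2-1-\tfrac1p)=1$. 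For Gaussian inputs---the extremal case, where every hypothesis is an equality---your residual is therefore \emph{strictly negative}, so no rearrangement of \eqref{e:lc} can close it: the Cauchy--Schwarz step has thrown away exactly the positivity needed.

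The paper's remedy is to bring in \eqref{e:lc} \emph{before} completing the square, and to do so with $\rho_j$-dependent weights. One applies \eqref{e:lc} to the fraction $(1-\rho_j)$ of the second-order term $\sigma_j\!\int\Delta\ell_j\,a\,dy$; by \eqref{e:weight} this exactly cancels $\tfrac{4\pi\beta}{t}w^2$. The surviving $\rho_j$-fraction, once combined with the first-order terms, produces a quadratic whose $|g_j|^2$-coefficient is $s_j(1-\rho_j\A_j)$ rather than your $s_j(1-\A_j)$, and \eqref{e:BL2} is precisely the condition that this quadratic fits Lemma~\ref{l:main} with \emph{no} residual. So the parameters $\rho_j$ are not merely bookkeeping for \eqref{e:weight}--\eqref{e:BL2}: they specify how much of each Laplacian term is sacrificed to the log-convexity hypothesis versus retained for the perfect-square identity, and your full-integration-by-parts-then-split scheme loses this freedom. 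The proof of \eqref{e:lcforu} in the paper follows the same pattern, with an analogous $\rho_j$-dependent splitting (the $\lambda_j$ of \eqref{e:ldefn}).
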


\begin{corollary} \label{c:Ymon2}
For $j=1,2$ let $u_j$ satisfy the heat equation
\begin{equation*}
    \partial_t u_j = \frac{\sigma_j}{4\pi} \Delta u_j
\end{equation*}
with initial data a compactly supported positive finite Borel
measure. Let $Q : (0,\infty) \rightarrow (0,\infty)$ be given by
\begin{equation*}
  Q(t) := t^{d(\alpha_1+\alpha_2-1-1/p)/2}\left\| u_1(t,\cdot)^{\A_1} * u_2(t,\cdot)^{\A_2}
  \right\|_{L^p(\R^d)}.
\end{equation*}
Then $Q(t)$ is nondecreasing for each $t > 0$.
\end{corollary}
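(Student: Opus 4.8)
The plan is to derive Corollary~\ref{c:Ymon2} from Theorem~\ref{t:closure2} in the same way that Corollary~\ref{c:Ymon} follows from Theorem~\ref{t:closure}; the only genuinely new ingredient is the verification that heat-flow solutions obey the pointwise bound \eqref{e:lc}. First I would record that, raising the defining relation for $u$ to the $p$th power and integrating,
$$
Q(t)^p = \big\| u(t,\cdot)^{1/p} \big\|_{L^p(\R^d)}^p = \int_{\R^d} u(t,x)\,dx,
$$
so that it suffices to prove $t \mapsto \int_{\R^d} u(t,\cdot)$ is nondecreasing, the map $s \mapsto s^{1/p}$ being increasing for $p > 0$.

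Next I would confirm the hypotheses of Theorem~\ref{t:closure2}. The heat inequality \eqref{e:he} holds trivially, with equality. The regularity and rapid-decay hypotheses are verified exactly as for Corollary~\ref{c:Ymon}: writing $u_j(t,\cdot) = H_{\sigma_j t} * \mu_j$ for the compactly supported positive finite measure $\mu_j$, the function $u_j(t,\cdot)$ is, for each $t>0$, smooth and positive and, together with all of its derivatives, enjoys Gaussian decay, whence $u_j(t,\cdot)^{\alpha_j}$ and the accompanying quantities are rapidly decreasing locally uniformly in $t$.

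The hard part is the log-concavity bound \eqref{e:lc}. Since $\operatorname{div}(\nabla u_j/u_j) = \Delta \log u_j$, this asks for $\Delta \log u_j(t,\cdot) \geq -2d\pi/(\sigma_j t)$. To obtain it I would exploit the explicit representation $u_j(t,x) = (\sigma_j t)^{-d/2}\int_{\R^d} e^{-\pi|x-y|^2/(\sigma_j t)}\,d\mu_j(y)$ and factor out the Gaussian in $x$, writing $u_j(t,x) = e^{-\pi|x|^2/(\sigma_j t)} F(x)$ with
$$
F(x) := (\sigma_j t)^{-d/2} \int_{\R^d} e^{2\pi \langle x,y\rangle/(\sigma_j t)}\, e^{-\pi|y|^2/(\sigma_j t)}\,d\mu_j(y).
$$
The function $F$ is, up to the linear reparametrisation $y \mapsto 2\pi y/(\sigma_j t)$, the Laplace transform of a positive finite measure, and such transforms are log-convex by H\"older's inequality, so $\nabla^2 \log F \geq 0$. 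As $\nabla^2 \log\big(e^{-\pi|x|^2/(\sigma_j t)}\big) = -\tfrac{2\pi}{\sigma_j t}\, I_d$, this gives $\nabla^2 \log u_j \geq -\tfrac{2\pi}{\sigma_j t}\, I_d$, and taking the trace yields \eqref{e:lc}; the pure heat kernel ($\mu_j = \delta_0$) shows the bound is sharp.

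With both hypotheses in hand, Theorem~\ref{t:closure2} supplies the heat inequality \eqref{e:heforu} for $u$ along with its rapid-decay properties. I would then differentiate under the integral and integrate by parts: using \eqref{e:heforu} and the Gaussian decay of $\nabla u$,
$$
\frac{d}{dt} Q(t)^p = \int_{\R^d} \partial_t u(t,x)\,dx \geq \frac{\sigma}{4\pi} \int_{\R^d} \Delta u(t,x)\,dx = 0,
$$
the final equality by the divergence theorem. Hence $Q(t)^p$, and therefore $Q(t)$, is nondecreasing for each $t>0$. Note that \eqref{e:lcforu} is also delivered by the theorem but is not itself needed for the monotonicity of $Q$ here; rather, it is the structural condition that renders the closure property iterable.
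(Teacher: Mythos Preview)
Your proof is correct and follows essentially the same route as the paper: verify the hypotheses of Theorem~\ref{t:closure2} and then integrate \eqref{e:heforu} over $\R^d$, appealing to the divergence theorem exactly as in the proof of Corollary~\ref{c:Ymon}. The one point of difference is that the paper simply cites Corollary~8.7 of \cite{BCCT} for the log-convexity bound \eqref{e:lc}, whereas you supply a direct self-contained argument via the log-convexity of Laplace transforms of positive measures---which is in fact the content of that cited result.
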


We remark that the idea behind this extension of Theorem
\ref{t:closure}(1) lies in \cite{BCCT} and involves a certain
log-convexity property for solutions to heat equations. In
particular, if there is equality in \eqref{e:he} and the initial
data for $u_j$ is some finite positive Borel measure then
\eqref{e:lc} is automatic by Corollary 8.7 of \cite{BCCT}.

Theorem \ref{t:closure2} self-improves to a result involving
iterated convolutions, as was the case with Theorem \ref{t:closure}.
Again, we leave the details of this to the interested reader.

Finally we remark that all of our results also hold in the setting
of the torus. This will be clear from our proofs.



\section{Proof of Theorem \ref{t:closure}, Corollary \ref{c:Ymon} and Theorem \ref{t:closure2}} \label{section:closure1}
An elementary but crucial component of the proof of Theorem
\ref{t:closure} and Theorem \ref{t:closure2} is the following.

\begin{lemma} \label{l:main}
  Let $0 < \A_1,\A_2,\Lambda_1,\Lambda_2 < \infty$. For $j=1,2$ suppose that $u_j:(0,\infty) \times
\mathbb{R}^d \rightarrow (0,\infty)$ is such that
$u_j(t,\cdot)^{\A_j}$, $\partial_t(u_j(t,\cdot)^{\A_j})$,
$\nabla(u_j(t,\cdot)^{\A_j})$, $u_j(t,\cdot)^{\A_j}|\nabla \log
u_j(t,\cdot)|^2$ and $\Delta(u_j(t,\cdot)^{\A_j})$ are rapidly
decreasing in space locally uniformly in time for $t>0$.
   Then
  \begin{align*}
    & \Lambda_1(u_1^{\A_1} * u_2^{\A_2})(u_1^{\A_1}|\tfrac{\nabla u_1}{u_1}|^2 * u_2^{\A_2}) +
\Lambda_2(u_1^{\A_1} * u_2^{\A_2})(u_1^{\A_1} * u_2^{\A_2}|\tfrac{\nabla u_2}{u_2}|^2) \\
&  + 2\Lambda_1^{1/2}\Lambda_2^{1/2}(u_1^{\A_1} *
u_2^{\A_2})(u_1^{\A_1}\tfrac{\nabla u_1}{u_1} *
  u_2^{\A_2}\tfrac{\nabla u_2}{u_2})- (\tfrac{\Lambda_1}{\A_1^2} + \tfrac{\Lambda_2}{\A_2^2} +
2\tfrac{\Lambda_1^{1/2}\Lambda_2^{1/2}}{\A_1\A_2})|\nabla(u_1^{\A_1}
* u_2^{\A_2})|^2
  \end{align*}
  evaluated at $(t,x) \in (0,\infty) \times \R^d$ coincides
  with
  \begin{align*}
    \tfrac{1}{2} \int_{\R^d} & \int_{\R^d} u_1(t,x-y)^{\A_1}u_2(t,y)^{\A_2}
    u_1(t,x-z)^{\A_1}u_2(t,z)^{\A_2} \times \\
  & |\Lambda_1^{1/2}\tfrac{\nabla u_1}{u_1}(t,x-y) + \Lambda_2^{1/2}\tfrac{\nabla u_2}{u_2}(t,y) -
  \Lambda_1^{1/2}\tfrac{\nabla u_1}{u_1}(t,x-z) - \Lambda_2^{1/2}\tfrac{\nabla u_2}{u_2}(t,z)|^2 \,dydz.
  \end{align*}
\end{lemma}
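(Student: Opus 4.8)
The plan is to recognise the claimed double integral as a ``variance'' expression and to reduce the whole identity to a short, purely algebraic computation; no integration by parts or cancellation of boundary terms is needed, so the regularity hypotheses serve only to guarantee that every integral converges and that one may differentiate the convolutions under the integral sign.

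First I would fix $(t,x)$, suppress the $t$-dependence, and introduce the abbreviations $f_j := u_j^{\A_j}$, $v_j := \tfrac{\nabla u_j}{u_j} = \nabla \log u_j$ and $a_j := \Lambda_j^{1/2}$. The single algebraic fact that drives everything is the chain rule in the form $\nabla f_j = \A_j f_j v_j$, equivalently $f_j v_j = \tfrac{1}{\A_j}\nabla f_j$. I would then set $F(y) := f_1(x-y)f_2(y)$ and $w(y) := a_1 v_1(x-y) + a_2 v_2(y)$, so that the integrand in the target double integral is exactly $F(y)F(z)\,|w(y)-w(z)|^2$, while $\int F = (f_1 * f_2)(x)$.

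Next I would expand $|w(y)-w(z)|^2 = |w(y)|^2 - 2 w(y)\cdot w(z) + |w(z)|^2$ and integrate in $y$ and $z$ separately. Using the symmetry of the two pure terms, the double integral collapses to $2\big[\,(\int F)(\int F|w|^2) - |\int F w|^2\,\big]$; this is the only genuinely structural step, and it is what converts a four-fold product into the three convolution terms plus the gradient term. It then remains to identify the two quantities $\int F w$ and $\int F|w|^2$.

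For the first moment, I would compute $\int F w = a_1 (f_1 v_1 * f_2) + a_2 (f_1 * f_2 v_2)$ and apply $f_j v_j = \tfrac{1}{\A_j}\nabla f_j$ together with $\nabla(f_1*f_2) = \nabla f_1 * f_2 = f_1 * \nabla f_2$ to obtain $\int F w = \big(\tfrac{a_1}{\A_1}+\tfrac{a_2}{\A_2}\big)\nabla(f_1*f_2)$; squaring reproduces exactly the coefficient $\tfrac{\Lambda_1}{\A_1^2}+\tfrac{\Lambda_2}{\A_2^2}+\tfrac{2\Lambda_1^{1/2}\Lambda_2^{1/2}}{\A_1\A_2}$ in front of $|\nabla(u_1^{\A_1}*u_2^{\A_2})|^2$. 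For the second moment, expanding $|w|^2 = a_1^2|v_1(x-\cdot)|^2 + 2a_1a_2\, v_1(x-\cdot)\cdot v_2 + a_2^2|v_2|^2$ and integrating against $F$ produces precisely $\Lambda_1(f_1|v_1|^2 * f_2)$, $2\Lambda_1^{1/2}\Lambda_2^{1/2}(f_1 v_1 * f_2 v_2)$ and $\Lambda_2(f_1 * f_2|v_2|^2)$; after multiplying by $\int F = f_1*f_2$ these are the first three terms on the left-hand side. Matching the two computations against the $2[\cdots]$ expression and dividing by $2$ completes the proof. I do not anticipate a serious obstacle: the content is entirely bookkeeping, and the only place requiring care is tracking the $1/\A_j$ factors through the chain-rule identity so that the gradient coefficient emerges correctly, the decay hypotheses being used tacitly to justify convergence and differentiation under the convolution.
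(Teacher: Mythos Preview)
Your proposal is correct and follows essentially the same approach as the paper: the paper's proof also rests on the chain-rule identity $\nabla(u_j^{\A_j}) = \A_j u_j^{\A_j}\tfrac{\nabla u_j}{u_j}$ (hence the two representations of $\nabla(u_1^{\A_1}*u_2^{\A_2})$) and then simply expands the square in the double integral and collects terms. Your variance formulation $2\big[(\int F)(\int F|w|^2)-|\int Fw|^2\big]$ is a tidy way to package that expansion, but the underlying computation and the role of the regularity hypotheses are identical.
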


\begin{proof} We remark that each convolution term is well-defined by the
regularity hypotheses on $u_1$ and $u_2$. Moreover, since
$\nabla(u_j^{\A_j}) = \A_j u_j^{\A_j} \tfrac{\nabla u_j}{u_j}$ is
rapidly decreasing in space for $j=1,2$ it follows that
$\nabla(u_1^{\alpha_1}
* u_2^{\alpha_2})$ coincides with $\alpha_1(u_1^{\alpha_1}
\tfrac{\nabla u_1}{u_1} * u_2^{\alpha_2})$ and
$\alpha_2(u_1^{\alpha_1} * u_2^{\alpha_2}\tfrac{\nabla u_2}{u_2})$,
depending on whether one applies the gradient to the left or right
of the convolution. Thus, Lemma \ref{l:main} follows upon expanding
the square in the integrand and collecting like terms.
\end{proof}

\subsection{Proof of Theorem \ref{t:closure}}
We begin by justifying the closure of the technical regularity
ingredients in Theorem \ref{t:closure}. For $j=1,2$ let $v_j$ be the
time dependent vector field on $\R^d$ given by $v_j := \frac{\nabla
u_j}{u_j}$.

Since the convolution of two rapidly decreasing functions on $\R^d$
is also rapidly decreasing, it is straightforward to check that
$u^{1/p}$ is rapidly decreasing locally uniformly in time. For the
time derivative, we note that
\begin{equation} \label{e:timederiv}
  \partial_t(u^{1/p}) = \partial_t(u_1^{1/p_1}) * u_2^{1/p_2} + u_1^{1/p_1} *
  \partial_t(u_2^{1/p_2}),
\end{equation}
where the interchange of differentiation and integration is
justified since $u_j^{1/p_j}$ and $\partial_t(u_j^{1/p_j})$ are
rapidly decreasing in space locally uniformly in time for $j=1,2$.
Hence $\partial_t(u^{1/p})$ is also rapidly decreasing in space
locally uniformly in time. Similarly, it follows that
$\nabla(u^{1/p})$ is rapidly decreasing in space locally uniformly
in time and, moreover, we may write
\begin{equation} \label{e:gradu1}
  \nabla(u^{1/p}) = \nabla(u_1^{1/p_1}) * u_2^{1/p_2} =
  \tfrac{1}{p_1} u_1^{1/p_1}v_1 * u_2^{1/p_2}
\end{equation}
or, by symmetry, $\nabla(u^{1/p}) = \tfrac{1}{p_2} u_1^{1/p_1}
* u_2^{1/p_2}v_2$. Next, observe that
\begin{equation*}
  u^{1/p}|\nabla \log u|^2 = p^2
  \frac{|\nabla(u^{1/p})|^2}{u^{1/p}} = \tfrac{p^2}{p_1^2}\frac{|u_1^{1/p_1}v_1 *
  u_2^{1/p_2}|^2}{u^{1/p}} \leq \tfrac{p^2}{p_1^2} u_1^{1/p_1}|v_1|^2
  * u_2^{1/p_2}
\end{equation*}
by \eqref{e:gradu1} and the Cauchy-Schwarz inequality. Since
$u_1^{1/p_1}|v_1|^2$ and $u_2^{1/p_2}$ are rapidly decreasing in
space locally uniformly in time by assumption, it follows that
$u^{1/p}|\nabla \log u|^2$ is also rapidly decreasing locally
uniformly in time.

Finally, we note that $\Delta(u^{1/p})$ is rapidly decreasing in
space locally uniformly in time since \eqref{e:gradu1} and our
hypotheses on $u_1$ and $u_2$ imply that
\begin{equation*}
  \Delta(u^{1/p}) = \Delta(u_1^{1/p_1}) * u_2^{1/p_2}. 
\end{equation*}
This concludes our justification of the closure of the regularity
ingredients in Theorem \ref{t:closure}. It is, however, a convenient
opportunity to note that we may also write
\begin{align} \label{e:Deltau1}
    \Delta(u^{1/p}) = \d(\nabla(u_1^{1/p_1}) * u_2^{1/p_2}) = \left\{\begin{array}{llllll}
    \tfrac{1}{p_1^2}u_1^{1/p_1}|v_1|^2 * u_2^{1/p_2} + \tfrac{1}{p_1}u_1^{1/p_1}\d(v_1) *
    u_2^{1/p_2} \\
    \tfrac{1}{p_1p_2} u_1^{1/p_1}v_1 * u_2^{1/p_2}v_2
    \end{array} \right.
  \end{align}
  depending on whether we apply the divergence to the term on the left or right of the convolution.
Since
 \begin{equation*}
   \Delta(u_1^{1/p_1}) = \tfrac{1}{p_1}\d(u_1^{1/p_1}v_1) =
   \tfrac{1}{p_1^2}u_1^{1/p_1}|v_1|^2 + \tfrac{1}{p_1}u_1^{1/p_1}\d(v_1)
 \end{equation*}
 it follows from the regularity of $u_1$ and $u_2$ that each convolution term in \eqref{e:Deltau1} is
well-defined. By symmetry the expression \eqref{e:Deltau1} also holds
with the subscripts 1 and 2 interchanged.

We now turn to proving Theorem \ref{t:closure}(1) where we have $p_j
\geq 1$ and $\partial_t u_j \geq \frac{\sigma_j}{4\pi} \Delta u_j$
for each $j=1,2$. Then,
  \begin{equation*} \label{e:vj}
    \frac{\partial_tu_j}{u_j} \geq \frac{\sigma_j}{4\pi} \frac{\d(\nabla
    u_j)}{u_j} = \frac{\sigma_j}{4\pi} (|v_j|^2 + \d(v_j))
  \end{equation*}
  and therefore, by \eqref{e:timederiv},
  \begin{align*}
    4\pi \frac{\partial_t u}{u^{(p-2)/p}}
   & \geq pu^{1/p}\left(\tfrac{\sigma_1}{p_1}u_1^{1/p_1}|v_1|^2
   * u_2^{1/p_2} + \tfrac{\sigma_2}{p_2}u_1^{1/p_1} * u_2^{1/p_2}|v_2|^2 \right. \\
  & \qquad \qquad \qquad + \left.\tfrac{\sigma_1}{p_1}u_1^{1/p_1}\d(v_1) * u_2^{1/p_2} +
  \tfrac{\sigma_2}{p_2} u_1^{1/p_1} * u_2^{1/p_2}\d(v_2) \right).
 \end{align*}
Furthermore,
\begin{align*}
   \frac{\Delta u}{u^{(p-2)/p}}
   = p(p-1)|\nabla(u^{1/p})|^2 + pu^{1/p}\Delta(u^{1/p})
  \end{align*}
  and therefore,
  \begin{align*}
    -\frac{\sigma_1p_1}{p}\frac{\Delta u}{u^{(p-2)/p}} = -(p-1)\sigma_1p_1|\nabla(u^{1/p})|^2
    + \sigma_1(p-p_1)u^{1/p}\Delta(u^{1/p}) - p \sigma_1u^{1/p}\Delta(u^{1/p}).
  \end{align*}
   Hence, by \eqref{e:Deltau1},
  \begin{align*}
   -\frac{\sigma_1p_1}{p}\frac{\Delta u}{u^{(p-2)/p}} &
   = -(p-1)\sigma_1p_1|\nabla(u^{1/p})|^2 + \tfrac{\sigma_1(p-p_1)}{p_1p_2}u^{1/p} (u_1^{1/p_1}v_1 *
    u_2^{1/p_2}v_2) \\
   & \quad - \tfrac{p\sigma_1}{p_1^2}u^{1/p} (u_1^{1/p_1}|v_1|^2 * u_2^{1/p_2})
   - \tfrac{p\sigma_1}{p_1}u^{1/p} (u_1^{1/p_1}\d(v_1) *
   u_2^{1/p_2}).
  \end{align*}
  By symmetry, it follows that
  \begin{align*}
   -\frac{\sigma_2p_2}{p}\frac{\Delta u}{u^{(p-2)/p}} &
   = -(p-1)\sigma_2p_2|\nabla(u^{1/p})|^2 + \tfrac{\sigma_1(p-p_2)}{p_1p_2}u^{1/p} (u_1^{1/p_1}v_1 *
    u_2^{1/p_2}v_2) \\
   & \quad - \tfrac{p\sigma_2}{p_2^2}u^{1/p} (u_1^{1/p_1} * u_2^{1/p_2}|v_2|^2)
   - \tfrac{p\sigma_2}{p_2}u^{1/p} (u_1^{1/p_1} * u_2^{1/p_2}\d(v_2)).
  \end{align*}
  Thus,
  \begin{align*}
    & \frac{1}{u^{(p-2)/p}}\left[4\pi\partial_t u -
    \tfrac{1}{p}\left(\sigma_1p_1 + \sigma_2p_2\right) \Delta u \right] \\
    & \quad \geq \tfrac{p\sigma_1}{p_1}(1-\tfrac{1}{p_1})u^{1/p}(u_1^{1/p_1}|v_1|^2 *
    u_2^{1/p_2}) + \tfrac{p\sigma_2}{p_2}(1-\tfrac{1}{p_2})u^{1/p}(u_1^{1/p_1} *
    u_2^{1/p_2}|v_2|^2) + \\
    & \quad
    \tfrac{1}{p_1p_2}(\sigma_1(p-p_1) + \sigma_2(p-p_2))
    u^{1/p}(u_1^{1/p_1}v_1 *
    u_2^{1/p_2}v_2) -(p-1)(\sigma_1p_1 + \sigma_2p_2)|\nabla(u^{1/p})|^2.
  \end{align*}
  By Lemma \ref{l:main}, it suffices to verify that
  \begin{equation} \label{e:1}
    p\sigma_1(p_1-1) + p\sigma_2(p_2-1) + 2\left(\tfrac{p\sigma_1}{p_1}(1-\tfrac{1}{p_1})\right)^{1/2}
    \left(\tfrac{p\sigma_2}{p_2}(1-\tfrac{1}{p_2})\right)^{1/2} =
    (p-1)(\sigma_1p_1 + \sigma_2p_2)
  \end{equation}
  and
  \begin{equation} \label{e:2}
   2\left(\tfrac{p\sigma_1}{p_1}(1-\tfrac{1}{p_1})\right)^{1/2}
    \left(\tfrac{p\sigma_2}{p_2}(1-\tfrac{1}{p_2})\right)^{1/2} =
    \frac{1}{p_1p_2}(\sigma_1(p-p_1) + \sigma_2(p-p_2)).
  \end{equation}
  However, \eqref{e:1} and \eqref{e:2} are elementary consequences of the hypotheses
  \eqref{e:scaling} and \eqref{e:BL}. This completes the proof of Theorem
  \ref{t:closure}(1).

  Now suppose that we have $p_j \leq 1$ and $\partial_t u_j \leq \frac{\sigma_j}{4\pi}
\Delta u_j$ for each $j=1,2$. By a very similar argument it follows
that
\begin{align*}
    & \frac{1}{u^{(p-2)/p}}\left[-4\pi\partial_t u + \tfrac{1}{p}\left(\sigma_1p_1 + \sigma_2p_2\right)\Delta u \right] \\
    & \geq \tfrac{p\sigma_1}{p_1}(\tfrac{1}{p_1}-1)u^{1/p}(u_1^{1/p_1}|v_1|^2 *
    u_2^{1/p_2}) + \tfrac{p\sigma_2}{p_2}(\tfrac{1}{p_2}-1)u^{1/p}(u_1^{1/p_1} *
    u_2^{1/p_2}|v_2|^2) \\
    & \quad -
    \tfrac{1}{p_1p_2}(\sigma_1(p-p_1) + \sigma_2(p-p_2))
    u^{1/p}(u_1^{1/p_1}v_1 *
    u_2^{1/p_2}v_2) +(p-1)(\sigma_1p_1 + \sigma_2p_2)|\nabla(u^{1/p})|^2,
\end{align*}
which is nonnegative by \eqref{e:scaling}, \eqref{e:BL} and Lemma
\ref{l:main}. This completes the proof of Theorem
\ref{t:closure}(2).

\subsection{Proof of Corollary \ref{c:Ymon}} Firstly, we verify that
$u_1$ and $u_2$ are sufficiently regular to apply Theorem
\ref{t:closure}. For $j=1,2$ suppose $u_j(0,\cdot) = d\mu_j$ and
that $d\mu_j$ is supported in the euclidean ball of radius $M$
centred at the origin. From the explicit formula for the solution
\begin{equation*}
  u_j(t,x) = \frac{1}{t^{d/2}}\int_{\R^d} e^{-\pi|x-y|^2/t}
  \,d\mu_j(y)
\end{equation*}
it is clear that $u_j(t,x) \leq t^{-d/2}e^{-\pi|x|^2/4t}$ for $|x|
> 2M$ and all $t > 0$. It follows that $u_j^{1/p_j}$ is rapidly
decreasing in space locally uniformly in time. Furthermore, by
interchanging differentiation and integration, it is easy to see
that
\begin{equation} \label{e:vjpoly}
|\nabla u_j(t,x)| \leq 2\pi t^{-d/2}(|x|+M)u_j(t,x).
\end{equation}
Consequently, $\nabla(u_j^{1/p_j})$ and $u_j^{1/p_j}|\nabla \log
u_j|^2$ are rapidly decreasing in space locally uniformly in time.
Similar considerations show that the quantities
$\partial_t(u_j^{1/p_j})$ and $\Delta(u_j^{1/p_j})$ are rapidly
decreasing in space locally uniformly in time.

To complete the proof of Corollary \ref{c:Ymon} it suffices, by the
divergence theorem, to show that for each $t > 0$,
$\int_{R\mathbb{S}^{d-1}} |\nabla u(t,\cdot)|$ tends to zero as $R$
tends to infinity. To see this, note that
$$\nabla u = p\nabla(u^{1/p})u^{1-1/p} =
p(u_1^{1/p_1}v_1 * u_2^{1/p_2})u^{1-1/p}$$ where, as before, $v_1 =
\tfrac{\nabla u_1}{u_1}$. However, if $r$ is chosen such that $1 >
1/r > 1-p$ then, by H\"{o}lder's inequality,
\begin{equation*}
  |\nabla u| \leq p (u_1^{\varepsilon_1
  r'}|v_1|^{r'} * u_2^{\varepsilon_2})^{1/r'} u^{1-1/p + 1/rp}
\end{equation*}
where $\varepsilon_j$ satisfies $(1/p_j - \varepsilon_j)r = 1/p_j$
for $j=1,2$. Any nonnegative power of $u_1$ or $u_2$ is rapidly
decreasing in space and, by \eqref{e:vjpoly}, $|v_1|$ has at most
linear growth in space. Hence, for each $t > 0$, $|\nabla
u(t,\cdot)|$ is rapidly decreasing in space which is clearly
sufficient to see that $\int_{R\mathbb{S}^{d-1}} |\nabla
u(t,\cdot)|$ tends to zero as $R$ tends to infinity.

\subsection{Proof of Theorem \ref{t:closure2}} The verification of the closure of the
regularity properties follows in the same way as in Theorem
\ref{t:closure}. We also remark that, as before, these ingredients
are sufficient to justify all convergence issues related to the
integrals which appear in the proof below.

Let $\beta := d(\A_1+\A_2-1-1/p)/2$ and for $j=1,2$ let $v_j$ denote
the time dependent vector field on $\R^d$ given by $v_j :=
\tfrac{\nabla u_j}{u_j}$. Thus,
  \begin{align*}
    4\pi \frac{\partial_t u}{t^{\beta p}(u_1^{\A_1} * u_2^{\A_2})^{p-2}}
    & \geq p(u_1^{\A_1} * u_2^{\A_2})\left[\A_1\sigma_1u_1^{\A_1}|v_1|^2 * u_2^{\A_2}
  + \A_2\sigma_2u_1^{\A_1} * u_2^{\A_2}|v_2|^2 \right. \\
  & \quad  + \left.\A_1\sigma_1u_1^{\A_1}\d(v_1) * u_2^{\A_2} +
  \A_2\sigma_2 u_1^{\A_1} * u_2^{\A_2}\d(v_2) \right] + \tfrac{4\pi \beta p}{t}(u_1^{\A_1} * u_2^{\A_2})^2.
  \end{align*}
The proof of \eqref{e:heforu} proceeds in a similar way to the proof
of Theorem \ref{t:closure}(1). The difference is that we use some of
the divergence terms on the right hand side of the above inequality
to kill off the factor $\tfrac{4\pi \beta p}{t}(u_1^{\A_1} *
u_2^{\A_2})^2$. In particular, by \eqref{e:lc},
\begin{align*}
  & (1-\rho_1)\A_1\sigma_1(u_1^{\A_1}\d(v_1) * u_2^{\A_2})
  + (1-\rho_2)\A_2\sigma_2(u_1^{\A_1} * u_2^{\A_2}\d(v_2)) + \tfrac{4 \pi \beta}{t}(u_1^{\A_1} *
  u_2^{\A_2})\\
  & \quad \geq \tfrac{2d\pi}{t}(\rho_1 \A_1 + \rho_2 \A_2 - 1 -
  \tfrac{1}{p})(u_1^{\A_1} * u_2^{\A_2})
\end{align*}
and the right hand side of the above inequality vanishes by
\eqref{e:weight}. Now, guided by the proof of Theorem
\ref{t:closure}(1), it follows that
  \begin{align*}
    & \frac{1}{t^{\beta p}(u_1^{\A_1} * u_2^{\A_2})^{p-2}}
    \left[4\pi\partial_t u -
    \tfrac{1}{p}\left(\tfrac{\sigma_1}{\A_1} + \tfrac{\sigma_2}{\A_2}\right)
    \Delta u \right] \\
    & \geq p\A_1(1-\rho_1\A_1)\sigma_1(u_1^{\A_1} * u_2^{\A_2})(u_1^{\A_1}|v_1|^2 *
    u_2^{\A_2}) + p\A_2(1-\rho_2\A_2)\sigma_2(u_1^{\A_1} * u_2^{\A_2})(u_1^{\A_1} *
    u_2^{\A_2}|v_2|^2) \\
    & \quad +
    \A_1\A_2(\sigma_1(p\rho_1-\tfrac{1}{\A_1}) + \sigma_2(p\rho_2 -
    \tfrac{1}{\A_2}))
    (u_1^{\A_1} * u_2^{\A_2})(u_1^{\A_1}v_1 *
    u_2^{\A_2}v_2) \\
    & \quad -(p-1)(\tfrac{\sigma_1}{\A_1} + \tfrac{\sigma_2}{\A_2})|\nabla(u_1^{\A_1} * u_2^{\A_2})|^2.
  \end{align*}
  By \eqref{e:weight}, \eqref{e:BL2} and Lemma \ref{l:main}
  the right hand side of the above inequality is nonnegative. This completes the proof of \eqref{e:heforu}.

  To prove \eqref{e:lcforu} we let $v$ be the time dependent
  vector field on $\R^d$ given by $v := \tfrac{\nabla u}{u}$. Thus,
  \begin{equation*}
    \d(v) = \frac{p}{(u_1^{\A_1} * u_2^{\A_2})^2}\left[\d(\nabla(u_1^{\A_1}
    * u_2^{\A_2}))(u_1^{\A_1} * u_2^{\A_2}) - |\nabla(u_1^{\A_1}
    * u_2^{\A_2})|^2\right].
  \end{equation*}
For $j=1,2$ let
\begin{equation} \label{e:ldefn}
  \lambda_j := \left(\frac{1-\rho_j\alpha_j}{2-\rho_1\A_1 - \rho_2\A_2}\right)^2
\end{equation}
and write
\begin{align*}
  \d(\nabla(u_1^{\A_1} * u_2^{\A_2})) & = \tfrac{\alpha_1}{2}\d(u_1^{\A_1}v_1 *
  u_2^{\A_2}) + \tfrac{\alpha_2}{2}\d(u_1^{\A_1} * u_2^{\A_2}v_2)
  \\
  & = \lambda_1\A_1u_1^{\A_1}\d(v_1) * u_2^{\A_2} +
  \lambda_2\A_2u_1^{\A_1} * u_2^{\A_2}\d(v_2) \\
  & \quad + \lambda_1\A_1^2u_1^{\A_1}|v_1|^2 * u_2^{\A_2} + \lambda_2\A_2^2u_1^{\A_1} *
  u_2^{\A_2}|v_2|^2 \\
  & \quad + \A_1\A_2(1-\lambda_1-\lambda_2)(u_1^{\A_1}v_1 *
  u_2^{\A_2}v_2).
\end{align*}
We shall assume that neither $\sigma_1$ nor $\sigma_2$ is equal to
zero; a simple modification of the argument will handle the
degenerate cases. By \eqref{e:lc},
\begin{align*}
 \lambda_1\A_1u_1^{\A_1}\d(v_1) * u_2^{\A_2} + \lambda_2\A_2u_1^{\A_1} *
 u_2^{\A_2}\d(v_2) & \geq - (\tfrac{\lambda_1 \alpha_1}{\sigma_1} +
 \tfrac{\lambda_1\alpha_1}{\sigma_2})\tfrac{2d\pi}{t}(u_1^{\alpha_1} *
 u_2^{\alpha_2}).
\end{align*}
Moreover, by \eqref{e:BL2} and \eqref{e:ldefn}, it is easy to check
that
\begin{equation*}
  r\sigma(\tfrac{\lambda_1 \alpha_1}{\sigma_1} +
 \tfrac{\lambda_1\alpha_1}{\sigma_2}) = \lambda_1^{1/2} +
 \lambda_2^{1/2} = 1
\end{equation*}
and therefore,
\begin{align*}
\d(v) \geq -\tfrac{2d\pi}{\sigma t} & + \tfrac{r}{(u_1^{\A_1} *
u_2^{\A_2})^2} \left[\lambda_1\A_1^2(u_1^{\A_1} *
u_2^{\A_2})(u_1^{\A_1}|v_1|^2
* u_2^{\A_2}) + \lambda_2\A_2^2(u_1^{\A_1} * u_2^{\A_2})(u_1^{\A_1}
* u_2^{\A_2}|v_2|^2) \right.\\
& + \left. \A_1\A_2(1-\lambda_1-\lambda_2)(u_1^{\A_1} *
u_2^{\A_2})(u_1^{\A_1}v_1 *
  u_2^{\A_2}v_2) - |\nabla(u_1^{\A_1} * u_2^{\A_2})|^2 \right]
\end{align*}
Since $\lambda_1 + \lambda_2 + 2\lambda_1^{1/2}\lambda_2^{1/2} =
(\lambda_1^{1/2} + \lambda_2^{1/2})^2 = 1$ it follows immediately
from Lemma \ref{l:main} that the term in square brackets in the
above inequality is nonnegative. This completes the proof of Theorem
\ref{t:closure2}.

\bibliographystyle{amsalpha}

\end{document}